\numberwithin{equation}{section}
\begin{document}
\title[Weak Convergence Theorems for Generalized Hybrid mappings] {Weak convergence theorems for equilibrium
problems and generalized Hybrid mappings}
\author[S.Alizadeh,F. Moradlou,]{Sattar Alizadeh$^1$ and Fridoun Moradlou$^2$}
\address{\indent $^{1,2}$ Department of Mathematics
\newline \indent Sahand University of Technology
\newline \indent Tabriz, Iran}
\email{\rm $^1$ sa\_alizadeh@sut.ac.ir} \email{\rm $^2$
moradlou@sut.ac.ir \& fridoun.moradlou@gmail.com}
\thanks{}
\begin{abstract}
In this paper, we introduce a new modified Ishikawa iteration for
finding a common element of the set of solutions of an equilibrium
problem and the set of fixed points of generalized hybrid mappings
in a Hilbert space. Our results generalize, extend and enrich some
existing results in the literature.
\end{abstract}
\subjclass[2010]{Primary 47H10, 47H09, 47J25, 47J05}

\keywords{Equilibrium problems, Fixed point, Hybrid method, Hilbert
space, Strong convergence, Weak convergence}

\maketitle
\baselineskip=15.8pt \theoremstyle{definition}
  \newtheorem{df}{Definition}[section]
    \newtheorem{rk}[df]{Remark}
\theoremstyle{plain}
 \newtheorem{lm}[df]{Lemma}
  \newtheorem{thm}[df]{Theorem}
  \newtheorem{exa}[df]{example}
  \newtheorem{cor}[df]{Corollary}
  \setcounter{section}{0}
  \section{Introduction}
Throughout this paper, we denote by $\mathbb{N}$ and $\mathbb{R}$ the set of positive integers and real numbers,
respectively. Let $H$ be a real Hilbert space with inner product $\langle.,.\rangle$ and induced norm $\|.\|$, and
let $E$ be a nonempty closed  convex subset of $H$. Let $f$ be a bifunction from $E\times E$ to $\mathbb{R}$. The equilibrium problem for $f:E\times E\rightarrow\mathbb{R}$ is to find $x\in E$ such that
\begin{equation}\label{ep.1}
f(x,y)\geq0,\quad(y\in E).
\end{equation}
The set of solutions of (\ref{ep.1}) is denoted by $EP(f)$, i.e.,
$$EP(f)=\{x\in E:\;f(x,y)\geq0,\;\;\forall y\in E\}.$$
\par
 A self mapping  $S$ of $E$ is called nonexpansive if
$$\|Sx-Sy\|\leq\|x-y\|,\quad( x,y\in E).$$
We denote by $F(S)$ the set of fixed points of $S$.
\par
Let $S:E\longrightarrow H$ be a mapping and let $f(x,y)=\langle
Sx,y-x\rangle$ for all $x,y\in E$. Then $z\in EP(f)$ if only if
$\langle Sz,y-z\rangle\geq0$ for all $y\in E$, i.e., $z$ is a solution
of the variational inequality $\langle Sx,y-x\rangle\geq0$. So, the
formulation (\ref{ep.1}) includes variational inequalities as
special cases. Also, numerous problems in physics, optimization and
economics reduce to find a solution of (\ref{ep.1}). Some methods
have been proposed to solve the equilibrium problem; see for
instance, \cite{b,a4,e,a16,Kinder}.
\par
In the recent years, many authors studied the problem of finding a
common element of the set of fixed points of a nonexpansive mapping
and the set of solutions of an equilibrium problem in the framework
of Hilbert spaces and Banach spaces, respectively; see for instance,
\cite{am2,a2,a3,a5,a9,a10,a18,a20,a23,a24} and the references therein.
\par
Let $E$ be a nonempty closed convex subset of a Banach space. In
1953, for a self mapping $S$ of $E$, Mann \cite{mann} defined the
following iteration procedure:
\begin{equation}\label{mann}
\begin{cases}
 & x_{0} \in E \,\,\,\hbox{chosen arbitrarily},\\
 & x_{n+1}  = \alpha_{n}x_{n}+(1-\alpha_{n})Sx_{n},
\end{cases}
\end{equation}
where $0\leq \alpha_{n}\leq 1$ for all $n \in \mathbb{N}\cup \{0\}$,
\par
Let $K$ be a closed convex subset of a Hilbert space $H$.
 In 1974, for a Lipschitzian pseudocontractive self mapping $S$ of $K$, Ishikawa \cite{ishikawa}
defined the following iteration procedure:
\begin{equation}\label{ishikawa}
\begin{cases}
  & x_{0} \in K \,\,\,\hbox{chosen arbitrarily},\\
  & y_{n}= \beta_{n}x_{n}+ (1-\beta_{n})Sx_{n}, \\
  & x_{n+1}  = \alpha_{n}x_{n}+(1-\alpha_{n})Sy_{n},
\end{cases}
\end{equation}
where $0\leq \beta_{n} \leq \alpha_{n}\leq 1$ for all $n \in
\mathbb{N}\cup \{0\}$ and he proved strong convergence of the
sequence $\{x_{n}\}$ generated by the above iterative scheme if
$\lim_{n \to \infty} \beta_{n} =1$ and $\sum_{n=1}^{\infty}
(1-\alpha_{n})(1-\beta_{n}) = \infty$. By taking $\beta_{n} =1$ for
all $n \geq 0$ in $(\ref{ishikawa})$, Ishikawa iteration process
reduces to Mann iteration process.

\par
Process (\ref{ishikawa}) is indeed more general than process
(\ref{mann}). But research has been done  on the latter due probably
to reasons that the formulation of process (\ref{mann}) is simpler
than that of (\ref{ishikawa}) and that a convergence theorem for
process (\ref{mann}) may lead to a convergence theorem for process
(\ref{ishikawa}) provided that $\{\beta_{n}\}$ satisfies certain
appropriate conditions. On the other hand, the process (\ref{mann})
may fail to converge while process (\ref{ishikawa}) can still
converge for a Lipschitz pseudocontractive mapping in a Hilbert
space \cite{chidume}. Actually, Mann and Ishikawa iteration
processes have only weak convergence, in general (see \cite{genel}).
\par
In 2007, Tada and Takahashi \cite{a27} for finding an element of
$EP(f)\cap F(S)$, introduced the following iterative scheme  for a
nonexpansive self mapping $S$ of a nonempty, closed convex subset $E$ in a Hilbert space $H$:
\begin{equation*}
\begin{cases}
&x_{1} =x\in H\,\,\,\hbox{chosen arbitrarily},\\
 &u_n\in E\;\text{such that}\quad f(u_n,y) +\frac{1}{r_n}\langle y-u_n,u_n-x_n\rangle\geq0,\quad\forall\: y\in E\\
 & x_{n+1}=\alpha_n x_n+(1-\alpha_n)Su_n,
  \end{cases}
  \end{equation*}
  for all $n\in\mathbb{N}$, where $f: E\times E \to \mathbb{R}$ satisfies appropriate conditions,
  $\{\alpha_n\}\subset[a,b]$ for some $a,b\in(0,1)$ and $\{r_n\}\subset(0,\infty)$ satisfies
  $\liminf_{n\rightarrow\infty}r_n>0.$ They proved $\{x_n\}$ converges weakly to $w\in F(S)\cap EP(f)$, where
   $w=\lim_{n\rightarrow\infty}P_{F(S)\cap EP(f)}(x_n).$
   \par
Let $E$ be a nonempty closed convex subset of $H$. A self mapping
$S$ of $E$ is called \textit{generalized hybrid} \cite{Kocourek} if
there exist $\alpha, \beta \in \mathbb{R}$ such that
\begin{equation}
\alpha\|Sx - Sy\|^{2} + (1-\alpha)\|x-Sy\|^{2} \leq
\beta\|Sx-y\|^{2} + (1-\beta)\|x-y\|^{2}
\end{equation}
for all $x, y \in E$. We call such a mapping an\textit{ $(\alpha,
\beta)$-generalized hybrid} mapping.
   \par
In this paper, we modify Ishikawa iteration process for finding a common element of the set of solution of an
 equilibrium problem and the set of fixed points of a generalized hybrid mapping.
\section{Preliminaries}
 A self mapping $S$ of $E$ is called:
(i) \textit{firmly nonexpansive}, if
$\|Sx-Sy\|^{2}\leq \langle x-y,Sx-Sy\rangle$ for all $x,y \in E$;
(ii)\textit{ nonspreading}, if $2\|Sx-Sy\|^{2}\leq \|Sx-y\|^{2} +
\|Sy-x\|^{2}$ for all $x,y \in E$; (iii) \textit{hybrid}, if
$3\|Sx-Sy\|^{2}\leq \|x-y\|^{2}+\|Sx-y\|^{2} + \|Sy-x\|^{2}$ for all
$x,y \in E$. Also, a  self mapping $S$ of $E$ with $F (S ) \neq \emptyset$ is called \textit{quasi-nonexpansive}
if $\|x - Sy\| \leq \| x - y\|$ for all $x \in F(S)$ and $y \in E$. It is well-known that for a \textit{quasi-nonexpansive} mapping $S$,
$F(S)$ is closed and convex \cite{itoh}.
\par
 It easy to see that
\begin{itemize}
  \item $(1,0)$-generalized hybrid mapping is
nonexpansive mapping;
  \item $(2,1)$-generalized hybrid mapping is
nonspreading mapping;
  \item $(\frac{3}{2},\frac{1}{2})$-generalized hybrid mapping is hybrid
mapping.
\end{itemize}
\par
 Let $S$ be a generalized hybrid mapping. If $F(S)\neq\phi, $
then there exists $x\in E$ such that $x=Sx$, so for all $y\in E$ we
have
$$\alpha\|x - Sy\|^{2} + (1-\alpha)\|x-Sy\|^{2} \leq
\beta\|x-y\|^{2} + (1-\beta)\|x-y\|^{2}$$ and this yields that
$\|x-Sy\|\leq\|x-y\|$, i.e., an $(\alpha,\beta)$-generalized hybrid
mapping with $F(S)\neq\phi$, is quasi-nonexpansive.
\par
 We denote the weak convergence and the strong convergence of $\{x_n\}$ to $x\in H$ by $x_n\rightharpoonup x$ and $x_n\rightarrow x$,
 respectively and denote $\omega_\omega(x_n)$ the weak $\omega$-limit set of the sequence $\{x_n\}$, i.e., $\omega_{\omega}(x_n):=\{x\in H:\;\exists\{ x_{n_k}\}\subset\{x_n\} ;\quad x_{n_k}\rightharpoonup x\}.$
\par
Now, we recall some basic properties of Hilbert spaces which we will use in next section. For $x,y \in H$, we have
\begin{equation}\label{pre1}
   \|\alpha x+(1-\alpha)y\|^2=\alpha\|x\|^2+(1-\alpha)\|y\|^2 -\alpha(1-\alpha)\|x-y\|^2,\quad  \forall \alpha  \in
\mathbb{R},
\end{equation}
\begin{equation}\label{pre2}
  \|x+y\|^2 \leq\|x\|^2+2\langle y,x+y\rangle,
\end{equation}
and
\begin{equation}\label{pre3}
\|x-y\|^2=\|x\|^2-\|y\|^2-2\langle x-y,y\rangle.
\end{equation}
 Let $K$ be a closed convex subset of $H$ and let $P_K$
be metric (or nearest point) projection from $H$ onto $K$ (i.e., for
$x\in H,\:P_K x $ is the only point in $K$ such that
 $ \|x-P_Kx\|=inf\lbrace \|x-z\|:z\in H\rbrace$). Let $x\in H$ and $z\in K$, then $z=P_Kx$ if and only if:
\begin{equation}\label{metric projection}
\langle x-z,y-z\rangle\leq 0,
\end{equation}
for all $y \in K$. For more details we refer readers to \cite{Agarwal, T3}.
  \begin{lm}\cite{Yan}\label{lem.1}
Let $H$ be a Hilbert space
and $\lbrace x_n\rbrace$ be a sequence in $H$ such that there exists
a nonempty subset $E\subset H$ satisfying
\begin{itemize}
  \item [(i)] For every $u\in E,\;lim_{n\rightarrow\infty}\|x_n-u\| \;exists.$
  \item [(ii)] If a subsequence $\lbrace x_{n_j}\rbrace\subset \lbrace
x_n\rbrace$ converges weakly to $u$, then $u\in E$,
\end{itemize}
then there exists $x_0\in E$ such that $x_n\rightharpoonup x_0$.
\end{lm}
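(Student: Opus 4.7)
The plan is to combine hypothesis (i) with the reflexivity of $H$ and Opial's identity to show that the weak $\omega$-limit set $\omega_\omega(x_n)$ is a single point of $E$. First, fix any $u\in E$ (nonempty by hypothesis); by (i), $\|x_n-u\|$ converges and is therefore bounded, so $\{x_n\}$ is bounded in $H$. Since $H$ is reflexive, every subsequence of $\{x_n\}$ admits a weakly convergent sub-subsequence, hence $\omega_\omega(x_n)\neq\emptyset$, and by (ii) we have $\omega_\omega(x_n)\subset E$.

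Next, I would show $\omega_\omega(x_n)$ is a singleton. Suppose $u,v\in\omega_\omega(x_n)$ with $x_{n_j}\rightharpoonup u$ and $x_{n_k}\rightharpoonup v$; then $u,v\in E$ by (ii). Expanding norms gives the identity
\[
\|x_n-u\|^2-\|x_n-v\|^2=\|u\|^2-\|v\|^2-2\langle x_n,\,u-v\rangle.
\]
By hypothesis (i), each term $\|x_n-u\|^2$ and $\|x_n-v\|^2$ has a limit, so the left-hand side converges, and therefore $\langle x_n,u-v\rangle$ converges as well. Evaluating this limit along $\{n_j\}$ gives $\langle u,u-v\rangle$, and along $\{n_k\}$ gives $\langle v,u-v\rangle$; equating these yields $\|u-v\|^2=0$, so $u=v$. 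Let $x_0$ denote this unique element, which lies in $E$.

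To conclude $x_n\rightharpoonup x_0$, I would argue by contradiction: if not, there would exist $z\in H$, $\varepsilon>0$, and a subsequence $\{x_{n_\ell}\}$ with $|\langle x_{n_\ell}-x_0,z\rangle|\geq\varepsilon$ for all $\ell$; since $\{x_{n_\ell}\}$ is bounded, a further subsequence converges weakly to some $w\in\omega_\omega(x_n)=\{x_0\}$, contradicting the lower bound. The only real obstacle is the uniqueness step, where the nonobvious ingredient is recognizing that (i) forces $\langle x_n,u-v\rangle$ itself (not just the norms) to converge; once this is noticed, the Opial-type identity closes the argument and the remainder is standard.
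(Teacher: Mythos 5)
Your proof is correct. Note that the paper does not prove this lemma itself --- it is quoted from the cited reference of Yan, Su and Feng --- but your argument (boundedness of $\{x_n\}$ from (i), weak sequential compactness of bounded sets in $H$ to get $\emptyset\neq\omega_\omega(x_n)\subset E$, the polarization identity $\|x_n-u\|^2-\|x_n-v\|^2=\|u\|^2-\|v\|^2-2\langle x_n,u-v\rangle$ combined with (i) to force $\langle x_n,u-v\rangle$ to converge and hence $u=v$, and finally the subsequence argument to upgrade to weak convergence of the whole sequence) is the standard proof of this Opial-type lemma and is essentially the one given in that reference.
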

We will use the following lemmas in the proof of our main results in
next section.
\begin{lm}\cite{taka2003}\label{lem.2} Let $H$ be a Hilbert space and $E$ be a nonempty, closed and convex subset of $H$ and $\lbrace x_n\rbrace$ be a sequence in $H$.
If $\|x_{n+1}-x\|\leq\|x_n-x\|$ for all $n\in\mathbb{N}$ and $x\in
E$, then $\lbrace P_E(x_n)\rbrace$ converges strongly to some $z\in
E$, where $P_E$ stands for the metric projection on $H$ onto $E$.
\end{lm}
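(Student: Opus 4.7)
My plan is to exploit the Fej\'er-monotonicity hypothesis and show that the projections $P_E x_n$ form a Cauchy sequence in $H$; strong convergence to a point in $E$ will then follow automatically from completeness and closedness.

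Write $p_n := P_E x_n$ and $d_n := \|x_n - p_n\|$. First I would observe that the distance sequence $\{d_n\}$ is non-increasing: since $p_n \in E$ is an admissible competitor for the projection defining $p_{n+1}$, and since the hypothesis applied with $x = p_n \in E$ gives $\|x_{n+1}-p_n\| \leq \|x_n-p_n\|$, we obtain
\[
d_{n+1} = \|x_{n+1} - p_{n+1}\| \leq \|x_{n+1} - p_n\| \leq \|x_n - p_n\| = d_n.
\]
Thus $\{d_n\}$ is monotone and bounded below, hence convergent to some $L \geq 0$.

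The core step is to derive, for $m \geq n$, an estimate of the form $\|p_m - p_n\|^2 \leq d_n^2 - d_m^2$. For this I would apply the variational characterization \eqref{metric projection} of $p_m$ at the admissible point $p_n \in E$, obtaining $\langle x_m - p_m,\, p_n - p_m\rangle \leq 0$, equivalently $\langle x_m - p_m,\, p_m - p_n\rangle \geq 0$. Expanding
\[
\|x_m - p_n\|^2 = \|x_m - p_m\|^2 + 2\langle x_m - p_m,\, p_m - p_n\rangle + \|p_m - p_n\|^2
\]
and dropping the non-negative middle term yields $\|p_m - p_n\|^2 \leq \|x_m - p_n\|^2 - d_m^2$. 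Invoking the Fej\'er hypothesis a second time at $x = p_n \in E$ iterated from step $n$ to step $m$ bounds $\|x_m - p_n\| \leq \|x_n - p_n\| = d_n$, and substituting produces the telescoping estimate $\|p_m - p_n\|^2 \leq d_n^2 - d_m^2$.

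The final step is then immediate: since $\{d_n^2\}$ converges, the right-hand side tends to $0$ as $m, n \to \infty$, so $\{p_n\}$ is Cauchy in $H$ and converges strongly to some $z \in H$. Because $E$ is closed and $p_n \in E$ for every $n$, the limit $z$ belongs to $E$, which is what we wanted. The only genuinely delicate point is the orthogonality manipulation in the third paragraph, where the variational inequality \eqref{metric projection} must be used to convert Fej\'er monotonicity into a contraction estimate for the projections; once that inequality is in hand, the remainder is routine.
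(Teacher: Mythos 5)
Your argument is correct and complete: the monotonicity of $d_n=\|x_n-P_Ex_n\|$, the variational inequality \eqref{metric projection} applied at $p_n\in E$, and the iterated Fej\'er bound together give $\|p_m-p_n\|^2\leq d_n^2-d_m^2$, which is exactly the Cauchy estimate needed. The paper itself gives no proof (it cites Takahashi--Toyoda), and your argument is precisely the standard one from that source, so there is nothing to add.
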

To study the equilibrium problem, for the bifunction $f:E\times E\longrightarrow\mathbb{R}$, we assume that $f$ satisfies the following conditions:
\begin{enumerate}
\item[(A1)] $f(x,x)=0$ for all $x\in E;$
\item[(A2)]$f$ is monotone, i.e., $f(x,y)+f(y,x)\leq0$ for all $x,y\in E;$
\item[(A3)]for each $x,y,z\in E$,
$$\hspace{-4cm}
\lim_{\downarrow0} f(tz+(1-t)x,y)\leq f(x,y);$$
\item[(A4)]for each $x\in E,\;y\mapsto f(x,y)$ is convex and lower semicontinuous.
\end{enumerate}
\par
The following lemma can be found in \cite{b}.
\begin{lm}\label{lm.3}
Let $E$ be a nonempty closed convex subset of $H$, let $f$ be a bifunction
from $E\times E$ to $\mathbb{R}$ satisfying $(A1)-(A4)$ and let $r > 0$ and $x\in H$. Then, there exists
$z\in E$ such that
$$f (z,y)+\frac{1}{r}\langle y-z,z-x\rangle\geq0,$$
for all $y\in E.$
\end{lm}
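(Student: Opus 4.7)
The plan is to follow the classical Blum--Oettli KKM-based existence argument. Set
$$F(z,y) := f(z,y) + \tfrac{1}{r}\langle y-z,\, z-x\rangle;$$
we seek $z^{\ast}\in E$ with $F(z^{\ast}, y)\geq 0$ for every $y\in E$. This auxiliary bifunction inherits good properties from $f$: $F(z,z) = 0$; $F(z,\cdot)$ is convex and lower semicontinuous by (A4); and (A2) combined with the identity $\langle y-z,z-x\rangle + \langle z-y,y-x\rangle = -\|y-z\|^{2}$ yields the strengthened monotonicity
$$F(z,y) + F(y,z)\ \leq\ -\tfrac{1}{r}\|y-z\|^{2}.$$

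For each $y\in E$, define $G(y) := \{z\in E : F(z,y)\geq 0\}$ and $\widetilde{G}(y) := \{z\in E : F(y,z)\leq 0\}$. Strengthened monotonicity gives $G(y)\subseteq \widetilde{G}(y)$, and each $\widetilde{G}(y)$ is convex and weakly closed, whereas $G(y)$ need not be closed because $f$ is only hemicontinuous in its first slot. First I would verify that $\{G(y)\}_{y\in E}$ is a KKM family: if some $w = \sum_{i}\lambda_{i} y_{i}$ avoided every $G(y_{i})$, then convexity of $F(w,\cdot)$ would force $0 = F(w,w) \leq \sum_{i}\lambda_{i} F(w,y_{i}) < 0$, a contradiction. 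Consequently $\{\widetilde{G}(y)\}_{y\in E}$ is also a KKM family.

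Since $E$ need not be bounded, I would localize by intersecting with the weakly compact convex sets $E\cap \overline{B}(x,n)$ and apply Ky Fan's KKM theorem to the family $\{\widetilde{G}(y)\cap \overline{B}(x,n)\}$, producing $z_{n}\in E\cap \overline{B}(x,n)$ with $F(y,z_{n})\leq 0$ for all $y\in E\cap \overline{B}(x,n)$. The coercive contribution of $\tfrac{1}{r}\|z-x\|^{2}$ coming from the strengthened monotonicity forces $\{z_{n}\}$ to remain bounded (testing against a fixed $y_{0}$), so for sufficiently large $n$ the ball constraint becomes inactive and one extracts $z^{\ast}\in\bigcap_{y\in E}\widetilde{G}(y)$.

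To pass from $\widetilde{G}$ back to $G$, I would run a Minty-type convex-combination step, which is where (A3) finally enters. For fixed $y\in E$ and $t\in(0,1)$, set $y_{t} := ty + (1-t)z^{\ast}$. Plugging $y_{t}$ into $F(y_{t}, z^{\ast})\leq 0$, using $z^{\ast} - y_{t} = t(z^{\ast} - y)$, and combining with the consequence $f(y_{t},z^{\ast}) \geq -\tfrac{t}{1-t}\, f(y_{t}, y)$ of convexity of $f(y_{t},\cdot)$ applied at $y_{t} = ty + (1-t)z^{\ast}$, gives
$$f(y_{t}, y)\ \geq\ \tfrac{1-t}{r}\langle z^{\ast} - y,\, z^{\ast} - x\rangle.$$
Letting $t\downarrow 0$ and invoking (A3) delivers $f(z^{\ast}, y) + \tfrac{1}{r}\langle y - z^{\ast}, z^{\ast} - x\rangle \geq 0$, which is the desired conclusion. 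The principal obstacle is the compactness reduction: without a priori boundedness of $E$, one must exploit the quadratic coercivity supplied by the regularization $\tfrac{1}{r}\|\cdot - x\|^{2}$ to keep the approximants $z_{n}$ in a fixed ball, so that the KKM conclusion survives the passage to the unbounded set.
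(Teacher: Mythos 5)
The paper itself gives no proof of this lemma: it is quoted from Blum and Oettli \cite{b}, so the only meaningful comparison is with the classical argument of that reference, which is exactly the route you take. Most of your reconstruction is sound: the strengthened monotonicity $F(z,y)+F(y,z)\leq-\tfrac{1}{r}\|y-z\|^{2}$, the KKM property of $\{G(y)\}$ and hence of $\{\widetilde{G}(y)\}$, the convexity and weak closedness of each $\widetilde{G}(y)$ (which is what lets the Ky Fan lemma run on the weakly compact truncations), and the final Minty/hemicontinuity step via (A3) are all correct (the inner product in your penultimate display should read $\langle z^{\ast}-y,\,y_{t}-x\rangle$ rather than $\langle z^{\ast}-y,\,z^{\ast}-x\rangle$, but this is harmless in the limit $t\downarrow 0$).

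The one step that fails as written is the boundedness of the approximants $z_{n}$. You claim that testing the inequality $F(y_{0},z_{n})\leq 0$ against a fixed $y_{0}$ keeps $\{z_{n}\}$ in a ball. It does not: the quadratic coercivity of the regularizer lives in the \emph{first} argument of $F$, not the second. Already for $f\equiv 0$ one has $\widetilde{G}(y_{0})=\{z\in E:\langle z-y_{0},y_{0}-x\rangle\leq 0\}$, a half-space, so membership in $\widetilde{G}(y_{0})$ imposes no bound whatsoever on $\|z_{n}\|$. The standard repair is to reverse the order of your last two steps: first run the Minty convex-combination argument \emph{inside} $E\cap\overline{B}(x,n)$ (the segment $[z_{n},y]$ stays in that set), upgrading $F(y,z_{n})\leq 0$ to $F(z_{n},y)\geq 0$ for all $y\in E\cap\overline{B}(x,n)$; then test with the fixed $y_{0}$ and estimate, using (A2) and a continuous affine minorant of the convex lower semicontinuous function $f(y_{0},\cdot)$,
$$F(z_{n},y_{0})\ \leq\ -f(y_{0},z_{n})+\tfrac{1}{r}\bigl(\|y_{0}-x\|\,\|z_{n}-x\|-\|z_{n}-x\|^{2}\bigr)\ \leq\ C\bigl(1+\|z_{n}\|\bigr)-\tfrac{1}{r}\|z_{n}-x\|^{2}+\tfrac{1}{r}\|y_{0}-x\|\,\|z_{n}-x\|,$$
whose right-hand side tends to $-\infty$ as $\|z_{n}-x\|\to\infty$, contradicting $F(z_{n},y_{0})\geq 0$. (Equivalently, verify the coercivity hypothesis of Blum--Oettli's existence theorem for $F$ directly and invoke it.) With that correction your argument closes and coincides with the proof in \cite{b}.
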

\par
The following lemma is established in \cite{a4}.
\begin{lm}\label{lm.4} For $r >0$, $x\in H$, define a mapping $T_r :H\longrightarrow E$ as follows:
$$\hspace{-2cm}T_r (x) = \{z\in E :f (z,y)+\frac{1}{r}\langle y-z,z-x\rangle\geq0,\; \forall y\in E\},$$
for all $x\in H$. Then, the following statements hold:

    \begin{enumerate}
      \item[(i)]$T_r$ is singel-valued;
      \item[(ii)]$T_r$ is firmly nonexpansive, i.e., for all $x,y\in H$,
      $$\|T_rx-T_ry\|^2\leq\langle T_rx-T_ry,x-y\rangle;$$
      \item[(iii)]$F(T_r)=EP(f);$
      \item[(iv)]EP(f) is closed and convex.
    \end{enumerate}
\end{lm}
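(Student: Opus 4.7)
The plan is to dispatch the four conclusions in the order (i), (ii), (iii), (iv), since each builds on the previous. The existence of at least one $z \in T_r(x)$ is already guaranteed by Lemma \ref{lm.3}, so the work is purely to read off the four listed properties from the defining variational inequality
\[
f(z,y)+\tfrac{1}{r}\langle y-z,\,z-x\rangle \ge 0 \quad (\forall y\in E).
\]

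For (i), I would suppose $z_1,z_2\in T_r(x)$ and write the defining inequality twice, once with test point $y=z_2$ and $z=z_1$, and once with the roles swapped. Adding them kills the $x$-term symmetrically, leaving $f(z_1,z_2)+f(z_2,z_1)+\tfrac{1}{r}\langle z_2-z_1,\,z_1-z_2\rangle \ge 0$. Monotonicity (A2) drops the $f$-terms, so $-\tfrac{1}{r}\|z_1-z_2\|^2 \ge 0$, forcing $z_1=z_2$. For (ii), the trick is the same two-inequality addition, but now with $z_1=T_rx$, $z_2=T_ry$ coming from possibly different arguments $x,y\in H$. After using (A2) to discard the bifunction terms, one gets $\langle z_2-z_1,\,(y-x)+(z_1-z_2)\rangle \ge 0$, which rearranges to $\|T_rx-T_ry\|^2 \le \langle T_rx-T_ry,\,x-y\rangle$, exactly firm nonexpansiveness.

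For (iii), I would just unpack definitions. If $z\in EP(f)$, then $z\in E$ and $f(z,y)\ge 0$ for all $y\in E$, so the defining inequality with $x=z$ is satisfied (the inner-product term vanishes), and by the uniqueness in (i) we conclude $T_r z = z$. Conversely, $z=T_rz$ forces $f(z,y)+\tfrac{1}{r}\langle y-z,\,0\rangle \ge 0$ for all $y\in E$, which is precisely $z\in EP(f)$. Finally, for (iv), I would invoke (iii) together with (ii): $T_r$ is firmly nonexpansive, hence nonexpansive, hence (since it has a fixed point, as soon as $EP(f)\ne\emptyset$) quasi-nonexpansive; the cited fact from \cite{itoh} recalled in the preliminaries then gives that $F(T_r)=EP(f)$ is closed and convex. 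If one wants to avoid assuming $EP(f)$ is nonempty, the closedness and convexity still follow directly from firm nonexpansiveness by the standard argument (the fixed-point set of a firmly nonexpansive mapping is always closed and convex, since firmly nonexpansive mappings on Hilbert spaces are of the form $(I+A)^{-1}$ for a monotone $A$), or simply note the statement is vacuous if $EP(f)=\emptyset$.

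The only delicate step is the sign bookkeeping when adding the two variational inequalities in (i) and (ii); everything else is direct. No serious obstacle is expected, since each conclusion reduces to a one- or two-line manipulation of the defining inequality together with the monotonicity assumption (A2).
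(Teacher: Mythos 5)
Your argument is correct, but there is nothing in the paper to compare it against: the authors give no proof of this lemma, quoting it directly from Combettes and Hirstoaga \cite{a4}. Your symmetrization device---writing the defining inequality at $z_1=T_rx$ and $z_2=T_ry$ with the test points interchanged, adding, and discarding the bifunction terms via the monotonicity hypothesis (A2) to isolate $-\tfrac{1}{r}\|z_1-z_2\|^2$ plus the cross term---is exactly the standard proof in that reference (going back to Blum and Oettli \cite{b}), with (i) the special case $x=y$, and your treatment of (iii) and (iv) is the usual one. The only point worth flagging is in (iv): the route through quasi-nonexpansiveness and the fact recalled from \cite{itoh} presupposes $F(T_r)\neq\emptyset$, but as you yourself observe the conclusion follows from nonexpansiveness of $T_r$ alone (or is trivial when $EP(f)=\emptyset$), so no gap remains.
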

\section{Main Results}
In this section, we prove weak the convergence theorems for finding a
common element of the set of solution of an equilibrium problem and
the set of fixed points of a generalized hybrid mapping.
\begin{thm}\label{thm1}
 Let $E$ be a nonempty closed convex subset of a real Hilbert space $H$. Let $f$ be a bifunction from $E\times E$ to $\mathbb{R}$ satisfying $(A1)-(A4)$ and $S$ be a generalized hybrid mapping of $E$  to $H$ with $F(S)\cap EP(f)\neq\phi$.
Assume that $0<\alpha\leq\alpha_n\leq1$  and $\{r_n\}\subset(0,\infty)$ satisfies
$\liminf_{n\rightarrow\infty}r_n>0$ and  $\{\beta_n\}$ is sequence
in $[b,1]$ for some $b\in(0,1)$ such that $\liminf_{n\rightarrow\infty}\beta_n(1-\beta_n)>0$. If  $\{x_n\}$ and $\{u_n\}$ be sequences generated by $x=x_1\in H$ and
\begin{equation*}
\begin{cases}
 &u_n\in E\;\text{such that}\quad f(u_n,y) +\frac{1}{r_n}\langle y-u_n,u_n-x_n\rangle\geq0,\quad\forall\: y\in E\\
 & y_n=(1-\beta_n)x_n+\beta_nSu_n,\\
 & x_{n+1}=(1-\alpha_n)x_n+\alpha_nSy_n,
  \end{cases}
\end{equation*}
for all $n\in\mathbb{N}$. Then $x_n\rightharpoonup v\in F(S)\cap EP(f)$, where $v=lim_{n\to\infty}P_{F(S)\cap EP(f)}(x_n)$.
\end{thm}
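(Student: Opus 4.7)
The plan is to establish Fej\'er monotonicity of $\{x_n\}$ with respect to $F(S)\cap EP(f)$ and then invoke Lemma~\ref{lem.1}. Fix $p\in F(S)\cap EP(f)$. Since $u_n=T_{r_n}x_n$ and $p\in F(T_{r_n})=EP(f)$ by Lemma~\ref{lm.4}, firm nonexpansiveness gives
\begin{equation*}
\|u_n-p\|^2\le \|x_n-p\|^2-\|u_n-x_n\|^2.
\end{equation*}
As recalled in the Preliminaries, a generalized hybrid map with nonempty fixed point set is quasi-nonexpansive, so $\|Su_n-p\|\le \|u_n-p\|$ and $\|Sy_n-p\|\le \|y_n-p\|$. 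Applying identity \eqref{pre1} first to $y_n=(1-\beta_n)x_n+\beta_n Su_n$ and then to $x_{n+1}=(1-\alpha_n)x_n+\alpha_n Sy_n$, and combining the resulting estimates, I expect to arrive at
\begin{equation*}
\|x_{n+1}-p\|^2\le \|x_n-p\|^2-\alpha_n\beta_n\|u_n-x_n\|^2-\alpha_n\beta_n(1-\beta_n)\|x_n-Su_n\|^2.
\end{equation*}
Hence $\{\|x_n-p\|\}$ is nonincreasing and convergent; telescoping and using $\alpha_n\ge\alpha>0$, $\beta_n\ge b>0$, and $\liminf_n\beta_n(1-\beta_n)>0$ yields $\|u_n-x_n\|\to 0$ and $\|x_n-Su_n\|\to 0$, hence also $\|u_n-Su_n\|\to 0$.

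Next I would verify condition (ii) of Lemma~\ref{lem.1}, namely that every weak subsequential limit of $\{x_n\}$ lies in $F(S)\cap EP(f)$. Let $x_{n_k}\rightharpoonup v$; since $\|x_n-u_n\|\to 0$, also $u_{n_k}\rightharpoonup v$. For membership in $EP(f)$ I would use the standard resolvent argument: starting from $f(u_{n_k},y)+\tfrac{1}{r_{n_k}}\langle y-u_{n_k},u_{n_k}-x_{n_k}\rangle\ge 0$, monotonicity (A2) yields $\tfrac{1}{r_{n_k}}\langle y-u_{n_k},u_{n_k}-x_{n_k}\rangle\ge f(y,u_{n_k})$; since $(u_{n_k}-x_{n_k})/r_{n_k}\to 0$ by the lower bound on $r_n$, passing $k\to\infty$ and using (A4) gives $f(y,v)\le 0$ for every $y\in E$, after which the standard convex-combination trick with (A1), (A3), (A4) produces $f(v,y)\ge 0$. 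For membership in $F(S)$, writing $S$ as $(\gamma,\delta)$-generalized hybrid and setting $x=u_{n_k}$, $y=v$ in the defining inequality gives
\begin{equation*}
\gamma\|Su_{n_k}-Sv\|^2+(1-\gamma)\|u_{n_k}-Sv\|^2\le \delta\|Su_{n_k}-v\|^2+(1-\delta)\|u_{n_k}-v\|^2;
\end{equation*}
passing to a further subsequence along which the four norms converge, and exploiting $\|u_{n_k}-Su_{n_k}\|\to 0$ to replace $Su_{n_k}$ by $u_{n_k}$ in the limit, one gets $\lim_k\|u_{n_k}-Sv\|^2\le \lim_k\|u_{n_k}-v\|^2$, which by Opial's property of $H$ forces $Sv=v$.

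Both hypotheses of Lemma~\ref{lem.1} are then met, so $x_n\rightharpoonup v$ for some $v\in F(S)\cap EP(f)$. To identify $v$ with $\lim_n P_{F(S)\cap EP(f)}(x_n)$, the main estimate above implies $\|x_{n+1}-u\|\le\|x_n-u\|$ for every $u$ in the closed convex set $F(S)\cap EP(f)$, so Lemma~\ref{lem.2} yields $p_n:=P_{F(S)\cap EP(f)}(x_n)\to z$ strongly for some $z$ in the set; applying the projection characterization \eqref{metric projection} with $y=v$ gives $\langle x_n-p_n,v-p_n\rangle\le 0$, and passing to the limit (using $x_n-v\rightharpoonup 0$ and $p_n\to z$) produces $\|v-z\|^2\le 0$, so $v=z$. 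The main obstacle is the $F(S)$-membership step: a generalized hybrid map need not be continuous or nonexpansive, so the classical demiclosedness principle is unavailable and one must argue directly from Opial's property together with the four-term defining inequality.
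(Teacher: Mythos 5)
Your proposal is correct and follows essentially the same route as the paper: Fej\'er monotonicity of $\{x_n\}$ with respect to $F(S)\cap EP(f)$, telescoping to get $\|x_n-u_n\|\to 0$ and $\|x_n-Su_n\|\to 0$ (you fold the two estimates the paper derives separately into one combined inequality, which is a harmless streamlining), the standard monotonicity/convexity argument for $EP(f)$-membership, and the generalized-hybrid inequality for $F(S)$-membership, finishing with Lemmas~\ref{lem.1} and~\ref{lem.2} and the projection characterization \eqref{metric projection}. The only cosmetic difference is that you invoke Opial's property for the fixed-point step, whereas the paper expands the four squared norms into inner products and reaches $0\le -\|u-Su\|^2$ directly --- in a Hilbert space these are the same computation.
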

\begin{proof}
 By Lemma \ref{lm.3}, $\{u_n\},\{y_n\}$  and $\{x_n\}$ are well defined. Since $S$ is a generalized hybrid mapping such that  $F(S)\neq\phi$,  $S$  is quasi-nonexpansive.
So $F(S)$ is closed and convex. Also by hypothesis $EP(f)\neq\phi$ . Set $q\in F(S)\cap EP(f).$
\par
From $u_n=T_{r_n}x_n$, we get
\begin{equation}\label{inq.1}
\|u_n-q\|=\|T_{r_n}x_n-T_{r_n}q\|\leq\|x_n-q\|.
\end{equation}
 On the other hand
 \begin{equation}\label{lm1.1}\begin{aligned}
    \|y_n-q\|^2
    &=(1-\beta_n)\|x_n-q\|^2+\beta_n\|Su_n-q\|^2-\beta_n(1-\beta_n)\|x_n-Su_n\|^2\\
    &\leq(1-\beta_n)\|x_n-q\|^2+\beta_n\|x_n-q\|^2-\beta_n(1-\beta_n)\|x_n-Su_n\|^2\\
    &=\|x_n-q\|^2-\beta_n(1-\beta_n)\|x_n-Su_n\|^2 ,
\end{aligned}
\end{equation}
and hence
\begin{equation}\label{inq1}
\begin{aligned}
    \|x_{n+1}-q\|^2
    &=\|(1-\alpha_n)x_n+\alpha_nSy_n-q\|^2\\
    &=(1-\alpha_n)\|x_n-q\|^2+ \alpha_n\|Sy_n-q\|^2-\alpha_n(1-\alpha_n)\|x_n-Sy_n\|^2\\
    &\leq(1-\alpha_n)\|x_n-q\|^2+\alpha_n\|y_n-q\|^2-\alpha_n(1-\alpha_n)\|x_n-Sy_n\|^2\\
    &\leq(1-\alpha_n)\|x_n-q\|^2+\alpha_n\|x_n-q\|^2-\alpha_n\beta_n(1-\beta_n)\|x_n-Su_n\|^2\\
    &\hspace{6.1cm}-\alpha_n(1-\alpha_n)\|x_n-Sy_n\|^2\\
    &\leq\|x_n-q\|^2-\alpha_n\beta_n(1-\beta_n)\|x_n-Su_n\|^2\\
    &\leq\|x_n-q\|^2.
\end{aligned}
\end{equation}
So, we can conclude that  $lim_{n\rightarrow\infty}\|x_n-q\|$
exists. This yields that $\{x_n\}$ and $\{y_n\}$ are
bounded. It follows from $(\ref{inq1})$ that
$$\|x_{n+1}-q\|^2\leq\|x_n-q\|^2-\alpha_n\beta_n(1-\beta_n)\|x_n-Su_n\|^2.$$
Using  $0<\alpha\leq\alpha_n\leq1,$
it is easy to see that
$$\|x_{n+1}-q\|^2\leq\|x_n-q\|^2-\alpha\beta_n(1-\beta_n)\|x_n-Su_n\|^2.$$
Also, we have
$$ 0\leq\alpha\beta_n(1-\beta_n)\|x_n-Su_n\|^2\leq\|x_n-q\|^2-\|x_{n+1}-q\|^2\rightarrow0 ,$$
as $n\rightarrow\infty$, since
$\liminf_{n\to\infty}\beta_n(1-\beta_n)>0$. Therefore
\begin{equation}\label{eq1}
\|x_n-Su_n\|\longrightarrow 0.
\end{equation}
This yields that
\begin{equation}\label{eq2}
\|y_n-x_n\|=\beta_n\|x_n-Su_n\|\longrightarrow 0.
\end{equation}
 Using  $(\ref{pre3})$ and Lemma \ref{lm.4} , we get
\begin{equation*}\begin{aligned}
\|u_n-q\|^2&=\|T_{r_n}x_n-T_{r_n}q\|^2\\
&\leq\langle T_{r_n}x_n-T_{r_n}q,x_n-q\rangle\\
&=\langle u_n-q,x_n-q\rangle\\
&=\frac{1}{2}(\|u_n-q\|^2+\|x_n-q\|^2-\|x_n-u_n\|^2),
\end{aligned}
\end{equation*}
hence
$$\|u_n-q\|^2\leq\|x_n-q\|^2-\|x_n-u_n\|^2.$$
Then, by the convexity of $\|.\|^2$, we have
\begin{equation*}\begin{aligned}
\|y_n-q\|^2 &= \|(1-\beta_n)(x_n-q)+\beta_n(Su_n-q)\|^2\\
&\leq(1-\beta_n)\|x_n-q\|^2+\beta_n\|Su_n-q\|^2\\
&\leq(1-\beta_n)\|x_n-q\|^2+\beta_n\|u_n-q\|^2\\
&\leq(1-\beta_n)\|x_n-q\|^2+\beta_n(\|x_n-q\|^2-\|x_n-u_n\|^2)\\
&=\|x_n-q\|^2-\beta_n\|x_n-u_n\|^2.
\end{aligned}
\end{equation*}
Therefore
\begin{equation}\label{inq.8}
\beta_n\|x_n-u_n\|^2\leq\|x_n-q\|^2-\|y_n-q\|^2
\end{equation}
Since $\{\beta_n\}\subset [b,1]$, it follows from (\ref{inq.8}) that
\begin{equation*}\begin{aligned}
b\|x_n-u_n\|^2&\leq\beta_n\|x_n-u_n\|^2\\
&\leq\|x_n-q\|^2-\|y_n-q\|^2\\
&=(\|x_n-q\|-\|y_n-q\|)(\|x_n-q\|+\|y_n-q\|)\\
&\leq\|y_n-x_n\|(\|x_n-q\|+\|y_n-q\|)
\end{aligned}
\end{equation*}
Using the boundedness of $\{x_n\}$ and $\{y_n\}$, it follows from (\ref{eq2}) and the above inequality that
\begin{equation}\label{eq.6}
\lim_{n\rightarrow\infty}\|x_n-u_n\|=0.
\end{equation}
Since $\liminf_{n\rightarrow\infty}r_n>0$, we get
\begin{equation}\label{eq.8}
\lim_{n\rightarrow\infty}\Big\|\frac{x_n-u_n}{r_n}\Big\|=\lim_{n\rightarrow\infty}\frac{1}{r_n}\|x_n-u_n\|=0.
\end{equation}
As $\beta_nSu_n=y_n-(1-\beta_n)x_n$, we have
\begin{equation*}\begin{aligned}
b\|u_n-Su_n\|&\leq\beta_n\|u_n-Su_n\|=\|y_n-(1-\beta_n)x_n-\beta_n u_n\|\\
&\leq\|y_n-x_n\|+\beta_n\|x_n-u_n\|\\
&\leq\|y_n-x_n\|+\|x_n-u_n\|\\
\end{aligned}
\end{equation*}
From (\ref{eq2}) and (\ref{eq.6}), we obtain
\begin{equation}\label{eq.7}
\lim_{n\rightarrow\infty}\|u_n-Su_n\|=0.
\end{equation}
Since $\{x_n\}$ is bounded, there exists a subsequence $\{x_{n_i}\}$ of $\{x_n\}$ such that $x_{n_i}\rightharpoonup u$. By (\ref{eq.6}) we obtain $u_{n_i}\rightharpoonup u$. We know that $E$ is closed and convex and  $\{u_{n_i}\}\subset E$, therefore $u\in E$.
\par
Now, we show that $u\in F(S)\cap EP(f)$. Since $u_n=T_{r_n}x_n$, we get
$$f(u_n,y)+\frac{1}{r_n}\langle y-u_n,u_n-x_n\rangle\geq0,$$
for all $y\in E$. From the condition $(A2)$, we obtain
$$\frac{1}{r_n}\langle y-u_n,u_n-x_n\rangle\geq f(y,u_n),$$
for all $y\in E$, therefore
\begin{equation}\label{inq.7}
\Big\langle y-u_{n_i},\frac{u_{n_i}-x_{n_i}}{r_{n_i}}\Big\rangle\geq f(y,u_{n_i}),
\end{equation}
for all $y\in E$. It follows from (\ref{eq.8}), (\ref{inq.7}) and condition $(A4)$ that
$$0\geq f(y,u),$$
for all $y\in E$. Suppose that $t\in(0,1]$, $y\in E$ and  $y_t=ty+(1-t)u.$ Therefore, $y_t\in E$ and so $f(y_t,u)\leq0$. Hence
$$0=f(y_t,y_t)\leq tf(y_t,y)+(1-t)f(y_t,u)\leq tf(y_t,y),$$
and dividing by t, we have $f(y_t,y)\geq0$, for all $y\in E$. By taking the limit as $t\downarrow0$ and using $(A3)$, we get $u\in EP(f)$.
\par
Next we show that  $u\in F(S)$. Since $S$ is a generalized
hybrid mapping, then
$$\gamma\|Sx-Sy\|^2+(1-\gamma)\|x-Sy\|^2\leq\lambda\|Sx-y\|^2+(1-\lambda)\|x-y\|^2$$
hence
\begin{equation*}
0\leq\lambda\|Sx-y\|^2+(1-\lambda)\|x-y\|^2-\gamma\|Sx-Sy\|^2-(1-\gamma)\|x-Sy\|^2
\end{equation*}
replacing $x$ and $y$ by $u_{n}$ and $u$ in  above inequality,
respectively, we get
\begin{equation*}
\begin{aligned}
0 &\leq
\lambda(\|Su_n\|^2-2\langle Su_n,u\rangle+\|u\|^2)+(1-\lambda)(\|u_n\|^2-2\langle u_n,u\rangle+\|u\|^2)\\
& \hspace{0.3cm}-\gamma(\|Su_n\|^2-2\langle Su_n,Su\rangle+\|Su\|^2) -(1-\gamma)(\|u_n\|^2-2\langle u_n,Su\rangle+\|Su\|^2)\\
&=\|u\|^2-\|Su\|^2+(\lambda-\gamma)(\|Su_n\|^2-\|u_n\|^2)\\&\hspace{0.3cm}+2\gamma\langle Su_n-u_n,Su\rangle -2\lambda\langle Su_n-u_n,u\rangle+2\langle u_n,Su-u\rangle\\
&\leq \|u\|^2-\|Su\|^2+(\lambda-\gamma)(\|Su_n\|+\|u_n\|)(\|Su_n - u_n\|) \\
&\hspace{0.3cm}+2\gamma\langle Su_n-u_n,Su\rangle -2\lambda\langle Su_n-u_n,u\rangle+2\langle u_n,Su-u\rangle.
\end{aligned}\end{equation*}
Now, substituting $n$ by $n_{i}$, we have
\begin{equation}\begin{aligned}\label{inq2}
&0\leq\|u\|^2-\|Su\|^2+(\lambda-\gamma)(\|Su_{n_i}\|+\|u_{n_i}\|)(\|Su_{n_i} - u_{n_i}\|) \\
&\hspace{0.3cm}+2\gamma\langle Su_{n_i}-u_{n_i},Su\rangle -2\lambda\langle Su_{n_i}-u_{n_i},u\rangle+2\langle u_{n_i},Su-u\rangle.
\end{aligned}\end{equation}
for all $ i\in \mathbb{N}$. Since  $u_{n_i}\rightharpoonup u$  as
$i\rightarrow\infty$, it follows from (\ref{eq.7}) and
(\ref{inq2}) that
\begin{equation*}\begin{aligned}
0&\leq\|u\|^2-\|Su\|^2+2\langle u,Su-u\rangle\\
 &=2\langle u,Su\rangle-\|u\|^2-\|Su\|^2\\&=-\|u-Su\|^2.
\end {aligned}\end{equation*}
 So, we have  $Su=u$, i.e.,  $u\in F(S)$. Therefore
 the condition (ii) of Lemma \ref{lem.1} satisfies for $E=F(S)\cap EP(f)$. On the other hand, we see that $ \lim_{n\rightarrow \infty}\|x_n-q\|$  exists for $q\in F(S)\cap EP(f)$. Hence, it follows
 from Lemma \ref{lem.1}  that there exists  $v\in F(S)\cap EP(f)$  such that $ x_n\rightharpoonup v$. In addition, for all  $q\in F(S)\cap EP(f)$, we have
 $$\|x_{n+1}-q\|\leq\|x_n-q\|,\qquad\forall~n\in \mathbb{N}, $$
so, Lemma \ref{lem.2} implies that there exists some  $w\in F(S)\cap EP(f)$
such that  $P_{F(T)\cap EP(f)}(x_n)\rightarrow w$. Then
$$\Big\langle v-P_{F(T)\cap EP(f)}(x_n),x_n-P_{F(T)\cap EP(f)}(x_n)\Big\rangle\leq0.$$
Hence, we get
$$\langle v-w,v-w \rangle =\|v-w\|^2\leq0.$$Therefore
$v=w, $ i.e.,  $x_n\rightharpoonup v=lim_{n\to\infty}P_{F(T)\cap EP(f)}(u_n)$.
\end{proof}
\begin{cor}
Let $E$ be a nonempty closed convex subset of a real Hilbert space $H$ and  $S$ be a generalized hybrid mapping of $E$  to $H$ with $F(S)\neq\phi$. Assume that $0<\alpha\leq\alpha_n\leq1$ and  $\{\beta_n\}$ is sequence
in $[b,1]$ for some $b\in(0,1)$ such that $\liminf_{n\rightarrow\infty}\beta_n(1-\beta_n)>0$. If  $\{x_n\}$ and $\{u_n\}$ be sequences generated by $x=x_1\in H$ and
\begin{equation*}
\begin{cases}
 &u_n\in E\;\text{such that}\quad\langle y-u_n,u_n-x_n\rangle\geq0,\quad\forall\: y\in E\\
 & y_n=(1-\beta_n)x_n+\beta_nSu_n,\\
 & x_{n+1}=(1-\alpha_n)x_n+\alpha_nSy_n,
  \end{cases}
\end{equation*}
for all $n\in\mathbb{N}$. Then $x_n\rightharpoonup v\in F(S)$, where $v=lim_{n\to\infty}P_{F(S)}(x_n)$.
\end{cor}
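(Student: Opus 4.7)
The plan is to derive the corollary as a direct specialization of Theorem~\ref{thm1} by taking the bifunction $f$ to be identically zero. First I would verify that $f\equiv 0$ on $E\times E$ trivially satisfies the four standing assumptions: (A1) is clear since $f(x,x)=0$; (A2) holds because $0+0\le 0$; (A3) and (A4) are immediate because the constant function $0$ is continuous, convex and lower semicontinuous in each variable. Thus Lemma~\ref{lm.3} and Lemma~\ref{lm.4} apply, and the resolvent $T_r$ is well defined for every $r>0$.

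Next I would identify the solution set and the resolvent under this choice. Since every point of $E$ satisfies $f(x,y)=0\ge 0$, we have $EP(f)=E$, and therefore $F(S)\cap EP(f)=F(S)\cap E=F(S)$, which is assumed nonempty. Moreover, the equilibrium condition defining $T_{r_n}$ in Theorem~\ref{thm1} collapses to
\[
\langle y-u_n,\,u_n-x_n\rangle\ge 0,\qquad \forall y\in E,
\]
which by the projection characterization \eqref{metric projection} is exactly the statement $u_n=P_E x_n$. In particular, $u_n$ does not depend on the auxiliary parameter $r_n$, so we are free to choose (for instance) $r_n\equiv 1$, which automatically satisfies $\liminf_{n\to\infty}r_n>0$. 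With this choice, the iteration scheme of Theorem~\ref{thm1} reduces verbatim to the one stated in the corollary.

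Finally I would invoke Theorem~\ref{thm1} directly: under the given hypotheses on $\{\alpha_n\}$ and $\{\beta_n\}$, the theorem gives $x_n\rightharpoonup v\in F(S)\cap EP(f)=F(S)$ with $v=\lim_{n\to\infty}P_{F(S)\cap EP(f)}(x_n)=\lim_{n\to\infty}P_{F(S)}(x_n)$, which is the desired conclusion.

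There is no real obstacle here; the only point deserving attention is the verification that the implicitly defined $u_n$ in the corollary is indeed the same object as the resolvent $T_{r_n}x_n$ of Theorem~\ref{thm1} with $f\equiv 0$, so that the full force of the parent theorem transfers without any modification.
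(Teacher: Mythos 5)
Your proposal is correct and follows exactly the paper's own route: the paper proves this corollary by setting $f(x,y)=0$ for all $x,y\in E$ and $r_n=1$ in Theorem~\ref{thm1}. Your additional verifications (that $f\equiv 0$ satisfies (A1)--(A4), that $EP(f)=E$, and that the defining condition on $u_n$ becomes the projection characterization $u_n=P_Ex_n$) are sound details the paper leaves implicit.
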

\begin{proof}
Letting $f(x,y)=0$ for all $x,y\in E$ and $r_n=1$ for all $n\in\mathbb{N}$ in Theorem \ref{thm1}, we get the desired result.
\end{proof}
\begin{cor}
Let $E$ be a nonempty closed convex subset of a real Hilbert space $H$. Let $f$ be a bifunction from $E\times E$ to $\mathbb{R}$ satisfying $(A1)-(A4)$ and $S$ be a generalized hybrid mapping of $E$  to $H$ with $F(S)\cap EP(f)\neq\phi$.
Assume that  $\{r_n\}\subset(0,\infty)$ satisfies
$\liminf_{n\rightarrow\infty}r_n>0$ and  $\{\beta_n\}$ is sequence
in $[b,1]$ for some $b\in(0,1)$ such that $\liminf_{n\rightarrow\infty}\beta_n(1-\beta_n)>0$. If  $\{x_n\}$ and $\{u_n\}$ be sequences generated by $x=x_1\in H$ and
\begin{equation*}
\begin{cases}
 &u_n\in E\;\text{such that}\quad f(u_n,y) +\frac{1}{r_n}\langle y-u_n,u_n-x_n\rangle\geq0,\quad\forall\: y\in E\\
 & x_{n+1}=S((1-\beta_n)x_n+\beta_nSu_n),
  \end{cases}
\end{equation*}
for all $n\in\mathbb{N}$. Then $x_n\rightharpoonup v\in F(S)\cap EP(f)$, where $v=lim_{n\to\infty}P_{F(S)\cap EP(f)}(x_n)$.
\end{cor}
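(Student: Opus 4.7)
The plan is to observe that this corollary is the special case of Theorem~\ref{thm1} in which the outer relaxation parameter is taken to be identically $1$. Setting $\alpha_n\equiv 1$ (which is permitted by the hypothesis $0<\alpha\le\alpha_n\le 1$ of Theorem~\ref{thm1} by choosing $\alpha=1$), the update $x_{n+1}=(1-\alpha_n)x_n+\alpha_n Sy_n$ collapses to $x_{n+1}=Sy_n$. Since $y_n=(1-\beta_n)x_n+\beta_n Su_n$, this is exactly the single-step update
\[
x_{n+1}=S\bigl((1-\beta_n)x_n+\beta_n Su_n\bigr)
\]
that appears in the statement. The conditions on $f$, on $S$, on $\{r_n\}$, and on $\{\beta_n\}\subset[b,1]$ with $\liminf_n\beta_n(1-\beta_n)>0$ are the same in both statements, and $F(S)\cap EP(f)\neq\emptyset$ is assumed throughout.

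The only issue worth a brief sanity check is whether any step of the proof of Theorem~\ref{thm1} degenerates when $\alpha_n=1$. Inspecting that proof, the key estimate reduces to
\[
\|x_{n+1}-q\|^2\le\|x_n-q\|^2-\alpha_n\beta_n(1-\beta_n)\|x_n-Su_n\|^2-\alpha_n(1-\alpha_n)\|x_n-Sy_n\|^2,
\]
whose last term simply vanishes when $\alpha_n=1$; one still retains the contractive inequality and the crucial conclusion $\|x_n-Su_n\|\to 0$, from which $\|y_n-x_n\|\to 0$, $\|x_n-u_n\|\to 0$, and $\|u_n-Su_n\|\to 0$ all follow as in Theorem~\ref{thm1}. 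The demiclosedness-type argument using $(A1)$--$(A4)$ and the generalized hybrid inequality then identifies every weak subsequential limit with a point of $F(S)\cap EP(f)$, and Lemmas~\ref{lem.1} and~\ref{lem.2} yield weak convergence of $\{x_n\}$ and strong convergence of $\{P_{F(S)\cap EP(f)}(x_n)\}$ to the same limit $v$.

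Thus the proposal is a single line: apply Theorem~\ref{thm1} with $\alpha_n=1$ for all $n\in\mathbb{N}$. No additional obstacles are anticipated, since every hypothesis of Theorem~\ref{thm1} is inherited verbatim and the proof of that theorem remains valid (indeed, slightly simplified) in this boundary case.
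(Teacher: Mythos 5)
Your proposal is correct and coincides with the paper's own proof, which likewise obtains the corollary by taking $\alpha_n=1$ for all $n\in\mathbb{N}$ in Theorem \ref{thm1}. Your additional sanity check that no step of that theorem's proof degenerates at $\alpha_n=1$ is a reasonable (and accurate) extra precaution, but it does not change the argument.
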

\begin{proof}
Letting $\alpha_n=1$ for all $n\in\mathbb{N}$, in Theorem \ref{thm1}, we get the desired result.
\end{proof}
\begin{thm}
 Let $E$ be a nonempty closed convex subset of a real Hilbert space $H$. Let $f$ be a bifunction from $E\times E$ to $\mathbb{R}$ satisfying $(A1)-(A4)$ and $S$ be a hybrid mapping of $E$  to $H$ with $F(S)\cap EP(f)\neq\phi$.
Assume that $0<\alpha\leq\alpha_n\leq1$  and $\{r_n\}\subset(0,\infty)$ satisfies
$\liminf_{n\rightarrow\infty}r_n>0$ and  $\{\beta_n\}$ is sequence
in $[b,1]$ for some $b\in(0,1)$ such that $\liminf_{n\rightarrow\infty}\beta_n(1-\beta_n)>0$. If  $\{x_n\}$ and $\{u_n\}$ be sequences generated by $x=x_1\in H$ and
\begin{equation*}
\begin{cases}
 &u_n\in E\;\text{such that}\quad f(u_n,y) +\frac{1}{r_n}\langle y-u_n,u_n-x_n\rangle\geq0,\quad\forall\: y\in E\\
 & y_n=(1-\beta_n)x_n+\beta_nSu_n,\\
 & x_{n+1}=(1-\alpha_n)x_n+\alpha_nSy_n,
  \end{cases}
\end{equation*}
for all $n\in\mathbb{N}$. Then $x_n\rightharpoonup v\in F(S)\cap EP(f)$, where $v=lim_{n\to\infty}P_{F(S)\cap EP(f)}(x_n)$.
\end{thm}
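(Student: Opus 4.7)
The plan is to observe that the final theorem is essentially a direct corollary of Theorem \ref{thm1}, since the class of hybrid mappings is contained in the class of generalized hybrid mappings. Specifically, the paper already notes in the preliminaries that a $(\frac{3}{2},\frac{1}{2})$-generalized hybrid mapping is exactly a hybrid mapping: the defining inequality
\[
\tfrac{3}{2}\|Sx-Sy\|^{2}+\bigl(1-\tfrac{3}{2}\bigr)\|x-Sy\|^{2} \le \tfrac{1}{2}\|Sx-y\|^{2}+\bigl(1-\tfrac{1}{2}\bigr)\|x-y\|^{2}
\]
rearranges immediately to $3\|Sx-Sy\|^{2}\le \|x-y\|^{2}+\|Sx-y\|^{2}+\|Sy-x\|^{2}$.

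So the proof I would write is a one-liner: since $S$ is hybrid, it is in particular a generalized hybrid mapping (with parameters $\alpha=\tfrac{3}{2}$ and $\beta=\tfrac{1}{2}$), and all hypotheses on $f$, $\{r_{n}\}$, $\{\alpha_{n}\}$, $\{\beta_{n}\}$ and on $F(S)\cap EP(f)$ required by Theorem \ref{thm1} are assumed to hold here verbatim. Applying Theorem \ref{thm1} yields the weak convergence $x_{n}\rightharpoonup v\in F(S)\cap EP(f)$ with $v=\lim_{n\to\infty}P_{F(S)\cap EP(f)}(x_{n})$, which is the desired conclusion.

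There is no real obstacle; the only thing to double-check is that the identification of hybrid mappings with $(\tfrac{3}{2},\tfrac{1}{2})$-generalized hybrid mappings is used exactly as stated in the preliminaries, and that the iteration scheme in the statement of the theorem matches character-for-character with the one in Theorem \ref{thm1} (which it does). One could alternatively redo the proof of Theorem \ref{thm1} line by line in the special case of hybrid mappings, but that would be redundant. I would therefore present the result as a direct consequence and cite Theorem \ref{thm1}.
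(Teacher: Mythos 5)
Your proposal is correct and coincides with the paper's own proof: the paper also disposes of this theorem in one line by noting that a hybrid mapping is a $(\frac{3}{2},\frac{1}{2})$-generalized hybrid mapping and then invoking Theorem \ref{thm1}. Nothing further is needed.
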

\begin{proof}
Since $S$ is a hybrid mapping, hence $S$ is a $(\frac{3}{2},\frac{1}{2})$-generalized hybrid mapping. Therefore by Theorem \ref{thm1}, we get the desired result.
\end{proof}
\begin{rk}
Since nonexpansive mappings are  $(1,0)$- generalized
hybrid mappings and nonspreading mappings are $(2,1)$-generalized hybrid mappings, then the Theorem \ref{thm1} holds for these mappings.
\end{rk}

\end{document}